\documentclass{amsart}
\usepackage[margin=3.5cm]{geometry}
\usepackage{amsmath}
\usepackage{amsthm,amsmath,verbatim,amssymb}
\usepackage{color}
\usepackage{ifpdf}
\ifpdf
 \def\outdrv{pdftex}
\else
 \def\outdrv{dvipdfm}
\fi
\usepackage[\outdrv]{pict2e}
\ifpdf
\usepackage{epstopdf}
\fi
\newtheorem{lem}{Lemma}

\newtheorem{prop}{Proposition}
\newtheorem{thm}{Theorem}
\newtheorem*{thm*}{Theorem}

\newcommand{\CP}{\mathbb{CP}}

\newcommand{\kahler}{K\"ahler }
\renewcommand{\d}{\partial}

\newcommand{\R}{\mathbb{R}}

\newcommand{\C}{\mathbb{C}}
\def \to {\rightarrow}
\newcommand{\D}{\mathbb{D}}
\newcommand{\E}{\mathbb{E}}

\begin{document}\title []
{Critical values of Gaussian $SU(2)$ random polynomials}
\author[Renjie Feng]{Renjie Feng}
\author[Zhenan Wang]{Zhenan Wang}

\address{Department of Mathematics and Statistics, Mcgill University, Montreal, QC, Canada}
\email{renjie@math.mcgill.ca}
\address{Department of Mathematics, Northwestern  University, Evanston, IL 60208, USA}
\email{zn\_wang@math.northwestern.edu}

\date{\today}
\maketitle
\begin{abstract}In this article, we will get the estimate of the expected distribution of critical values  of Gaussian $SU(2)$ random polynomials as the degree is large enough. The result about the expected density is a direct application of the Kac-Rice formula. The critical values
will accumulate at infinity, then we will study the rate of this convergence and its rescaling limit as $n\to\infty$.
\end{abstract}

\section{Introduction}
Random polynomials and random holomorphic functions are studied as
ways to gain insight for problems arising in string theory and
analytic number theory \cite{DSZ, H, S}. In \cite{K}, Kac studied
and determined a formula for the expected distribution of zeros of
some real Gaussian random polynomials. His work was generalized to
complex random polynomials and random analytic functions throughout
the years, we refer to \cite{BSZ, DPSZ, EK, HKPV, ST} for more
backgrounds and results.

\subsection{$SU(2)$ polynomials}

When the random polynomial is defined invariant with respect to some
group action, the problem can turn out to be particularly
interesting, we refer \S 2.3 in \cite{HKPV} for examples. In this
article, we will study a special family: the Gaussian $SU(2)$
 random polynomials.   This is
of particular interest in the physics literature as the zeros describe a random
spin state for the ~Majorana~ representation (modulo phase) on the unit sphere \cite{H}.

Given a probability space $\Omega$ and $\{a_j\}_{j=0}^\infty$ a collection of i.i.d complex random variables with density $\frac{1}{\pi}e^{-|z|^2}$ on it,  the family of $SU(2)$ random polynomials is defined as
\begin{equation}\label{su2}p_n(z)=\sum_{j=0}^n a_j \sqrt{ {{n}\choose{j}}}z^j.\end{equation}
Although this polynomial is defined on $\C$, we may also view it as an
analytic  function on  $\CP^1 = \C\cup{\infty}$ with a pole at
$\infty$.

 Various
properties of the zeros of random $SU(2)$ polynomials have been studied such as the distribution of zeros and
the two points correlation function \cite{BSZ, H}. First, zeros of this polynomials are uniformly distributed on
$S^2\cong \CP^1$  with respect to the Fubini-Study metric, i.e., the average
distribution of zeros is invariant under the $SU(2)$ action on $\CP^
1$ \cite{HKPV}. To be more precise,
let's denote $$\mathcal Z_{p_n}=\sum_{z\in\CP^1:\,\,p_n(z)=0}\delta_z$$ as the empirical measure of zeros of Gaussian $SU(2)$ random polynomials and define the pairing
$$\langle \mathcal Z_{p_n},\phi\rangle =\sum_{z\in\CP^1:\,\,p_n(z)=0} \phi(z) \,\,\,\, \mbox{where}\,\,\, \phi\in C^\infty(\CP^1).$$
We define the expectation
$$\langle \E \mathcal Z_{p_n}, \phi\rangle:=\E \langle \mathcal Z_{p_n},\phi\rangle=\frac1{\pi^{n+1}}\int_{\C^{n+1}}\left(\sum_{z\in\CP^1:\,\,p_n(z)=0} \phi(z)\right)e^{-|a|^2}d\ell_{a_0}\cdots d\ell_{a_n},$$
where $d\ell_{a_j}=\frac 1{2i}da_j\wedge d\bar a_j$ is the Legesgue measure on $\C$.

Then the expected density of zeros is calculated in \cite{BSZ} as
$$\E \mathcal Z_{p_n}=n\omega_{FS},$$ in the sense that,
$$\E\langle \mathcal Z_{p_n},\phi\rangle =n\int_{\CP^1}\phi\omega_{FS} \,\,\,\, \mbox{where}\,\,\, \phi\in C^\infty(\CP^1),$$
where $\omega_{FS}$ is the Fubini-Study form on $\CP^1$ \cite{GH}.

We can also study the two points correlation function of zeros of $SU(2)$ polynomials and its scaling property. We define the two points correlation function as \cite{BSZ}
 $$ K_n(z,w):=\E \left(Z_{p_n}(z)\otimes Z_{p_n}(w)\right),$$
such that for any smooth test function $\phi_1(z)\otimes\phi_2(w) $, we have the pairing
$$\left\langle K_n(z,w),\phi_1(z)\otimes\phi_2(w)\right\rangle=\E \left(\langle \mathcal Z_{p_n}, \phi_1\rangle\right)\left(\langle \mathcal Z_{p_n}, \phi_2\rangle\right).$$
If we scale the two points correlation function by a factor $\frac 1 {\sqrt n}$, then we have
 $$K_n(\frac{z}{\sqrt n},\frac{w}{\sqrt n})=\frac{(\sinh t^2+t^2)\cosh t-2t\sinh t}{\sinh t^3}+O(\frac 1 {\sqrt n}),$$
 where $t=\frac {|z-w|^2}{2}$ and $|z-w|$ is the geodesic distance of $z$ and $w$ on $\CP^1$.
 It's easy to see
 $$K_n(\frac{z}{\sqrt n},\frac{w}{\sqrt n})=t-\frac 2 9 t^3+O(t^5) \,\,\,\mbox{as}\,\,\, t\to 0,$$
 which implies zeros repel each other. We refer to \cite{BSZ, H} for more details.

\subsection{Main results}
In this article, we will study the expected distribution of nonvanishing critical values of $|p_n|$ as $n$ tends to infinity.

Note that the modulus $|p_n|$ is a subharmonic function, thus there is no local maximum; local minimum are all zeros and thus nonvanishing critical values are obtained only at saddle points \cite{FS}. Hence the expected density of nonvanishing critical values of $|p_n|$ we study in this article is in fact the expected density of values of saddle points of $|p_n|$.

The nonvanishing critical values of $|p_n|$ are obtained at points
   \begin{equation}\label{set}\{z\in\C:\,\,\,p_n'=0 \,\,\mbox{and}\,\,\, p_n\neq 0\}.   \end{equation}
 For a random polynomial $p_n$, it has no repeated zeros almost surely, 
 which implies that
 the set (\ref{set}) is almost surely equivalent to
   \begin{equation}\{z\in\C:\,\,\,p_n'=0\},\end{equation}  i.e., (nonvanishing) $|p_n|$ and $p_n$ have the same critical points almost surely.

   Hence, we will first get the expected density of critical values of $p_n$ in Theorem \ref{main}, as a direct consequence, we can apply the polar coordinate to get the expected density of nonvanishing critical values of $|p_n|$ in Theorem \ref{main2}.

We denote the empirical measure of critical values of $p_n$ as
\begin{equation}\label{empirical}\mathcal C_{p_n}=\sum_{z: \,\, p'_n(z)=0}\delta_{p_n(z)}.\end{equation}
 We now define the pairing
\begin{equation}\label{pair}\langle \mathcal C_{p_n}, \phi\rangle =\sum_{z: \,\, p'_n(z)=0}\phi(p_n(z)), \,\,\,\,\,\, \forall \phi(x)\in C_c^\infty(\R^2),\end{equation}
where  $C_c^\infty(\R^2)$ is the space of smooth functions on $\R^2$ with compact support.

We denote $\mathbb D_{p_n}(x)$ as the expected density of critical values of $p_n$
 in the sense that
\begin{equation}\label{expect}\E\langle \mathcal C_{p_n}, \phi\rangle =\int_\C \phi(x)\mathbb D_{p_n}(x)d\ell_x , \,\,\,\,\,\,\forall \phi(x)\in C_c^\infty(\R^2),\end{equation}
whereas $d\ell_ x$ is the ~Lebesgue~ measure of $\C$.

Those definitions also apply to the empirical measure of the nonvanishing
critical values of $|p_n|$ which is
\begin{equation}\label{em3}\mathcal C_{|p_n|}=\sum_{z: \,\,
p'_n=0}\delta_{|p_n|},\end{equation} which is a measure defined on the
nonnegative real line $\R_+$.

We define its expectation as
\begin{equation}\langle \E\mathcal C_{|p_n|}, \phi\rangle:=\E\langle \mathcal C_{|p_n|}, \phi\rangle=
\int_0^\infty\phi(x)\mathbb D_{|p_n|}dx,\,\,\,\forall\,\,\phi(x)\in
C^\infty_c(\R_+),\end{equation} where $dx$ is the ~Lebesgue~ measure
on $\R$.

In this article, we will first get the exact formula for the expected density $\mathbb D_{p_n}$ in the Proposition \ref{Dn} by the Kac-Rice formula (see section \S \ref{KR}), then we study the asymptotic behavior of $\D_{p_n}$ as $n\to \infty$. Our main results are
\begin{thm} \label{main}The expected density $\mathbb D_{p_n}$ of the empirical measure $\mathcal C_{p_n}$ of the critical values of $p_n$ satisfies the estimate,
\begin{equation} \label{mainequation} \mathbb D_{p_n}=\frac {1-e^{-|x|^2}}{\pi|x|^2}+\frac 1\pi\int _0^1 e^{-(s-s\log s)|x|^2}ds+o(1) \,\,\,\mbox{as}\,\,\, n\to \infty,
\end{equation}
 for  any $x\in \C$.
\end{thm}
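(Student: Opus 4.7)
My plan is to start from the Kac--Rice formula (Proposition \ref{Dn}) in the form
\[
\mathbb{D}_{p_n}(x)=\int_{\C}\E\bigl[\,|p_n''(z)|^{2}\,\delta(p_n(z)-x)\,\delta(p_n'(z))\bigr]\,d\ell_z,
\]
evaluate the expectation by Gaussian conditioning, and carry out the $n\to\infty$ asymptotic analysis. The triple $(p_n(z),p_n'(z),p_n''(z))$ is complex Gaussian with covariances obtained by differentiating $K_n(z,w)=(1+z\bar w)^{n}$ on the diagonal, and writing $r=|z|^{2}$ a Schur--complement computation produces the explicit pieces $\det\operatorname{Cov}(p_n,p_n')=n(1+r)^{2n-2}$, the conditional mean and variance of $p_n''(z)$ given $(p_n(z),p_n'(z))=(x,0)$,
\[
\mu=-\frac{n(n-1)\bar z^{2}x}{(1+r)^{2}},\qquad \sigma^{2}=2n(n-1)(1+r)^{n-4},
\]
and the joint density factor $\pi^{-2}n^{-1}(1+r)^{-(2n-2)}\exp(-(1+nr)|x|^{2}/(1+r)^{n})$. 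Since the integrand depends on $z$ only through $r$, integrating out the angular variable collapses the Kac--Rice formula to the one-dimensional integral
\[
\mathbb{D}_{p_n}(x)=\frac{1}{\pi}\int_{0}^{\infty}\!\left[\frac{n(n-1)^{2}r^{2}|x|^{2}}{(1+r)^{2n+2}}+\frac{2(n-1)}{(1+r)^{n+2}}\right]e^{-(1+nr)|x|^{2}/(1+r)^{n}}\,dr.
\]

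The natural rescaling for the limit is $u=nr$, since $(1+u/n)^{n}\to e^{u}$ brings the exponential into the shape $\exp(-(1+u)e^{-u}|x|^{2})$. A preliminary estimate shows that the contribution from $r\geq\delta>0$ is exponentially small in $n$, so the mass concentrates on the scale $r\sim 1/n$; dominated convergence then gives
\[
\mathbb{D}_{p_n}(x)=\frac{1}{\pi}\int_{0}^{\infty}\!\bigl[u^{2}|x|^{2}e^{-2u}+2e^{-u}\bigr]\,e^{-(1+u)e^{-u}|x|^{2}}\,du+o(1).
\]

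To identify this limit with \eqref{mainequation}, set $X=|x|^{2}$ and note the elementary identity
\[
\frac{d}{du}\!\left[u\,e^{-u}\,e^{-(1+u)e^{-u}X}\right]=\bigl[(1-u)e^{-u}+u^{2}Xe^{-2u}\bigr]e^{-(1+u)e^{-u}X},
\]
whose integral from $0$ to $\infty$ vanishes and therefore yields
\[
\int_{0}^{\infty}\!\bigl[u^{2}Xe^{-2u}+2e^{-u}\bigr]e^{-(1+u)e^{-u}X}du=\int_{0}^{\infty}\!(u+1)e^{-u}e^{-(1+u)e^{-u}X}du.
\]
The substitution $s=(1+u)e^{-u}$ turns the $ue^{-u}$ piece on the right into $\int_{0}^{1}e^{-sX}ds=(1-e^{-X})/X$, while the substitution $s=e^{-u}$ (under which $(1+u)e^{-u}=s-s\log s$) turns the $e^{-u}$ piece into $\int_{0}^{1}e^{-(s-s\log s)X}ds$. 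Adding the two summands produces exactly the right-hand side of \eqref{mainequation}.

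The main technical hurdle is supplying an $n$-uniform integrable majorant for the dominated-convergence step. Using the elementary bound $(1+u/n)^{n}\geq e^{u/2}$ valid on $u\in[0,n]$ one obtains the envelope $u^{2}|x|^{2}e^{-u}+2e^{-u/2}$, while the tail $u\in[n,\infty)$, corresponding to $r\geq 1$, contributes $O(n^{3}2^{-2n})$ by a crude estimate, completing the justification.
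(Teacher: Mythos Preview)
Your proof is correct and reaches the same formula, but the route is genuinely different from the paper's.

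Both arguments start from the Kac--Rice integral; indeed your one-dimensional formula
\[
\mathbb{D}_{p_n}(x)=\frac{1}{\pi}\int_{0}^{\infty}\!\left[\frac{n(n-1)^{2}r^{2}|x|^{2}}{(1+r)^{2n+2}}+\frac{2(n-1)}{(1+r)^{n+2}}\right]e^{-(1+nr)|x|^{2}/(1+r)^{n}}\,dr
\]
is exactly the paper's intermediate expression (before the integration by parts that yields Proposition~\ref{Dn}) after the shift $r\mapsto r-1$. From there the two arguments diverge. The paper first simplifies to the compact form $\frac{n-1}{\pi}\int_{0}^{1}y_{n}(t)e^{-y_{n}(t)|x|^{2}}\,dt$ with $y_{n}(t)=nt^{n-1}-(n-1)t^{n}$, then introduces auxiliary quantities $g_{n}=\int_{0}^{1}e^{-y_{n}|x|^{2}}dt$ and $h_{n}=n\int_{0}^{1}t^{n-1}e^{-y_{n}|x|^{2}}dt$, relates them to $\mathbb{D}_{p_{n}}$ by a further integration by parts, and passes to the limit using the monotone increase of $z_{n}(s)=-ns^{(n-1)/n}+(n-1)s$ to $-(s-s\log s)$ after the substitution $s=t^{n}$. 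You instead rescale $u=nr$ immediately, take the pointwise limit $(1+u/n)^{n}\to e^{u}$ under dominated convergence, and only afterwards identify the limiting integral via the derivative identity for $ue^{-u}e^{-(1+u)e^{-u}X}$ together with the substitutions $s=(1+u)e^{-u}$ and $s=e^{-u}$. These substitutions are precisely the $n\to\infty$ shadows of the paper's $s=t^{n}$, so the two approaches are dual rather than unrelated: the paper does the algebra at finite $n$ and then takes a monotone limit, while you take the limit first and do the algebra on the limiting integral. Your route is more direct and makes the concentration scale $r\sim 1/n$ explicit; the paper's route avoids constructing a majorant by exploiting monotonicity.

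One small correction: your tail estimate ``$O(n^{3}2^{-2n})$'' for $r\ge 1$ only accounts for the first bracketed term. The second term $2(n-1)(1+r)^{-(n+2)}$ contributes $\int_{1}^{\infty}2(n-1)(1+r)^{-(n+2)}dr=\tfrac{2(n-1)}{(n+1)2^{n+1}}\sim 2^{-n}$, which is larger than $n^{3}2^{-2n}$. This does not affect the argument, since $2^{-n}$ is still $o(1)$, but the stated bound should be amended.
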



As proved in Proposition \ref{Dn}, the density $\D_{p_n}d\ell_x$ only depends on $|x|$, i.e., the modulus of $|p_n|$, thus we can rewrite it as $\D_{p_n}(|x|)|x|d|x|d\theta$ under the polar coordinate. If we integrate on $\theta$ variable, then  \begin{equation}\label{relation}\D_{|p_n|}=\int_0^{2\pi} \D_{p_n}(|x|)|x|d\theta=2\pi|x| \D_{p_n}(|x|)\end{equation} will be the density of critical values of $|p_n|$. Thus we get
\begin{thm} \label{main2}The expected density $\mathbb D_{|p_n|}$ of the empirical measure $\mathcal C_{|p_n|}$ of the nonvanishing critical values of $|p_n|$ satisfies the estimate,
\begin{equation} \label{mainequation2} \mathbb D_{|p_n|}(x)=\frac {2(1-e^{-x^2})}{x}+2x\int _0^1 e^{-(s-s\log s)x^2}ds+o(1) \,\,\,\mbox{as}\,\,\, n\to \infty,
\end{equation}
for any $x\in \R_+$.
\end{thm}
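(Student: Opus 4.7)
The plan is to deduce Theorem~\ref{main2} directly from Theorem~\ref{main} by performing the change of variables from the plane to the half-line via the modulus map, using the radial symmetry of the density $\D_{p_n}$ that is established in Proposition~\ref{Dn}. The author's telegraphing sentence in the paragraph preceding the theorem is essentially the whole proof strategy, so the task is to justify each step carefully with the appropriate pairings against test functions.

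First I would observe that by definition of the empirical measures \eqref{empirical} and \eqref{em3}, we have $\mathcal C_{|p_n|} = (|\cdot|)_* \mathcal C_{p_n}$, the pushforward under the modulus map $\mathbb C \to \R_+$. Taking expectations and unwinding the pairings, for any $\phi \in C_c^\infty(\R_+)$ we obtain
\begin{equation*}
\langle \E\mathcal C_{|p_n|},\phi\rangle = \E\langle \mathcal C_{p_n},\phi\circ|\cdot|\rangle = \int_\C \phi(|x|)\,\D_{p_n}(x)\,d\ell_x.
\end{equation*}

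Next, using the radial symmetry of $\D_{p_n}$ from Proposition~\ref{Dn}, I write $\D_{p_n}(x)=\D_{p_n}(|x|)$ and pass to polar coordinates $x = re^{i\theta}$, $d\ell_x = r\,dr\,d\theta$. The $\theta$-integration is trivial and produces
\begin{equation*}
\langle \E\mathcal C_{|p_n|},\phi\rangle = 2\pi\int_0^\infty \phi(r)\,r\,\D_{p_n}(r)\,dr,
\end{equation*}
so that $\D_{|p_n|}(r) = 2\pi r\,\D_{p_n}(r)$ as densities on $\R_+$. Substituting the asymptotic expansion from Theorem~\ref{main} and simplifying yields precisely \eqref{mainequation2}. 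The $o(1)$ error term multiplied by $2\pi r$ remains $o(1)$ when tested against any $\phi\in C_c^\infty(\R_+)$, since the support of $\phi$ is a compact subset of $\R_+$ on which $r$ is bounded, so the uniform $o(1)$ control inherited from Theorem~\ref{main} is preserved.

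There is no substantive obstacle here: all the analytic work, namely the Kac--Rice computation in Proposition~\ref{Dn} and the asymptotic analysis leading to Theorem~\ref{main}, has already been carried out. The only point requiring minor care is to verify that the $o(1)$ remainder in Theorem~\ref{main} is uniform on compact subsets of $\C\setminus\{0\}$ (or more precisely, integrable uniformly against compactly supported $\phi$), so that after multiplication by $2\pi r$ it yields a legitimate $o(1)$ remainder in the sense of densities tested against $C_c^\infty(\R_+)$.
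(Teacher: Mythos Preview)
Your proposal is correct and follows exactly the paper's own approach: the paper derives Theorem~\ref{main2} in the paragraph immediately preceding it by noting that $\D_{p_n}$ is radial (Proposition~\ref{Dn}), passing to polar coordinates, and integrating out $\theta$ to obtain $\D_{|p_n|}=2\pi|x|\,\D_{p_n}(|x|)$, then substituting Theorem~\ref{main}. Your write-up is in fact more careful than the paper's about the pushforward and the handling of the $o(1)$ term, but the argument is the same.
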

The decay of $\mathbb D_{|p_n|}(x)$ is of order $1/x$ as $x$ goes to infinity, thus the total mass on the interval $[a,\infty)$ is infinity for any $a>0$, i.e., the critical values will accumulate at infinity as $n\to\infty$. In order to study the rate of this accumulation,  we  consider  the distribution function $F_n(x)$ of the following probability density $$\frac{\D_{|p_n|}}{n-1}=\frac1{n-1}\E (\sum_{p_n'=0}\delta_{|p_n|}).$$

Next, we will show that the critical values are spreading out    exponentially.
\begin{thm}\label{grow}
For any fixed $ \epsilon>0$, $F_n(e^{n^\frac{1-\epsilon}{2}})\to 0$ and $F_n(e^{n^\frac{1+\epsilon}{2}})\to 1$ as $n\to\infty$.
\end{thm}

 Then the modulus of critical values of $p_n$ will mainly concentrate in the interval $[e^{n^\frac{1-\epsilon}{2}},e^{n^\frac{1+\epsilon}{2}}]$ as $n$ large enough. Thus we need to consider the following rescaled probability density to get more information about this convergence
$$R_n(x)=(F_n(e^{\frac{nx}2}))'.$$ Then we prove that $R_n(x)$ satisfies the following rescaled limit
\begin{thm}\label{rescaling}
\begin{equation*}
\lim_{n\to\infty}R_n(x)=
\begin{cases} e^{-x} & \text{if $x>0$,}
\\
 \lim_{n\to\infty}\mathbb D_{p_n}(1)&\text{if $x=0$,}
 \\
 0&\text{if $x<0$,}
\end{cases}
\end{equation*}
where $\lim_{n\to\infty}\mathbb D_{p_n}(1)$ is the constant given
by the leading term in \eqref{mainequation}  evaluated at $1$.
\end{thm}

\subsection{Further remarks}
First note that our setting is different from the one in \cite{DSZ}. For example, in \cite{DSZ}, critical points of $SU(2)$ polynomials are defined to be the points $$\{z\in\CP^1:\,\, \nabla'p_n=0\},$$
where $\nabla'=\frac \d{\d z}-\frac{n\bar zdz}{1+|z|^2}$ is the smooth Chern connection on the line bundle $\mathcal O(n)\to \CP^1$ with respect to the ~Fubini~-Study metric and $p_n$ is a global holomorphic section of the line bundle $\mathcal O(n)\to \CP^1$ \cite{GH}. By choosing such smooth Chern connection, the expected distribution of critical points is also invariant under the $SU(2)$ action \cite{DSZ}.  But in this article, the critical points are defined by the usual derivative $$\{z\in\C:\,\, \frac{\d p_n}{\d z}=0\}.$$ In fact, the derivative $\frac \d{\d z}$ is a meromorphic flat Chern connection on $\mathcal O(n)\to \CP^1$ with a pole at $\infty$. Under this setting, the expected density of critical points is not $SU(2)$ invariant, we refer to \cite{Ha} for more details.

 Our second remark is as following. In \cite{FZ}, the authors studied the expected density of nonvanishing critical values of the pointwise norm of Gaussian random holomorphic sections of the positive holomorphic line bundle over compact \kahler manifolds. Now let's briefly explain the main result in \cite{FZ} and compare it with Theorem \ref{main2}.  Take Gaussian $SU(2)$ random polynomials (sections) $p_n$ for example.  We equip the line bundle $\mathcal O(n)\to \CP^1$ with a Hermitian metric $h^n=e^{-n\phi}$ where $\phi=\log ( 1+|z|^2)$ is the \kahler potential of Fubini-Study metric. Then the pointwise $h$-norm of the holomorphic section $|p_n|_{h^n}=|p_n|e^{-\frac {n\phi} 2}$ is global defined on $\CP^1$ \cite{GH} and hence the critical points of $|p_n|_{h^n}$ is defined as
$$\Sigma_n=\{z\in \CP^1: \,\, \frac {\d |p_n|_{h^n}}{\d z}=0\}.$$
We define the (normalized) empirical measure of critical values of $|p_n|_{h^n}$ as
$$\mathcal C_{|p_n|_{h^n}}:=\frac 1 n\left(\sum_{z\in \Sigma_n}\delta_{|p_n|_{h^n}}\right),$$
which is also a measure defined on $\R_+$.

Then the expectation of $\mathcal C_{|p_n|_{h^n}}$ satisfies the estimate
\begin{equation}\label{timed}\E \mathcal C_{|p_n|_{h^n}}=x\left(2x^2-4+8e^{-\frac{x^2}2}\right) e^{-x^2}+O(\frac 1 n),\,\,\,\,x\in \R_+,\end{equation}
as $n$ large enough. In fact, this estimate is universal: it holds on any Riemannian surfaces \cite{FZ}.

Thus the (normalized) density $\E\mathcal C_{|p_n|_{h^n}}$ is
decaying exponentially  as $x$ is large enough which is quite different from
the behavior of (non-normalized) density $\E\mathcal C_{|p_n|}$ in
Theorem \ref{main2}. This is mainly because of the connection we
choose: the usual derivative $\frac \d{dz}$ in this article is a
meromorphic flat connection on $\CP^1$ with a pole at $\infty$ while
in \cite{FZ}, the proof of (\ref{timed}) relies on a choice of
smooth Chern connection $\nabla'=\frac \d{\d z}-\frac{n\bar zd
z}{1+|z|^2}$.

\bigskip
\textbf{Acknowledgements}:
We would like to thank S. Zelditch for suggesting this problem and many helpful discussions. We also want to thank the referee for many valuable
comments in the original version.


\section{Kac-Rice formula}\label{KR}
In this section,  we first review the Kac-Rice formula for a stochastic process, referring to \cite{AT, K, R} for more details. Then we generalize the formula to the expected distribution of critical values of $p_n$.

The Kac-Rice formula is as follows: let $f(z)$ be a real valued stochastic process indexed by a compact interval $I\subset \R$.  Then the Kac-Rice formula for
the expected number of zeros is$$\label{kr}\E\#\{z\in I: \,\,f(z)=0\}=\int_I \int_\R |y|p_z(0,y)dydz,$$
where $p_z(0,y)$ is the joint density $p_z(x,y)$ of $(f, f')$ evaluated at $(0,y)$.
If $f$ is a Gaussian process, then the joint  density $p_z(x,y)$ is determined by the covariance matrix of $(f,f')$ \cite{AT}.

The proof of this formula is explained in more details in \cite{AT}.
The idea of the proof is based on the following observation
$$\#\{z\in I: \,\,f(z)=0\}=\int_I \delta_0(f(z))|f'(z)|dz.$$
We take expectation on both sides to get
\begin{align*}\E\#\{z\in I: \,\,f(z)=0\}=&\int_I  \int_{\R_y}\int_{\R_x} \delta_0(x)p_z(x,y)|y|dxdydz\\=&\int_I\int_\R |y|p_z(0,y)dydz.\end{align*}
Thus the expected density of zeros of $f$ is given by
\begin{equation}\label{realcase}\E \left(\sum_{z\in I:\,\, f(z)=0}\delta_z\right)=\left(\int_\R |y|p_z(0,y)dy\right)dz.\end{equation}
If $f(z)$ is a complex stochastic process indexed by a compact complex domain, the above formula reads
\begin{equation}\label{complexcase}\E \left(\sum_{z\in I:\,\, f(z)=0}\delta_z\right)=\left(\int_\C |y|^2p_z(0,y)d\ell_y\right)d\ell_z,\end{equation}
 where $d\ell_y$ and $d\ell_ z$ are Lebesgue measures on $\C$. Compared with (\ref{realcase}), we get $|y|^2$ since a $1$-dimensional complex random process is a $2$-dimensional real random process. In fact, this formula is  based on the definition of the delta function and the identity
$$\#\{z\in I: \,\,f(z)=0\}=\int_I \delta_0(f(z))\frac 1{2i }df\wedge d\bar f=\int_I \delta_0(f(z))|f'|^2d\ell_ z.$$
The formula arises when we take expectation on both
sides.

\subsection{Kac-Rice formula: Revisit}
In this subsection, let's get the formula for the expected density of critical values of a (real or complex) stochastic process $f$ by the method of Kac-Rice.

For simplicity, let's first consider a smooth real gaussian process $f\in C^\infty(I)$ where $I$ is a compact subset in $\R$.

 Let $\Theta\subset \R$  be a compact subset. Let's denote the set of critical values in $\Theta$ as
$$\mathcal C_\Theta=\{z\in I: \,\,f(z)\in \Theta, \,\,\,f'(z)=0\}.$$
Let's denote the measure  $\mu(x)dx$ on $\Theta$ as
$$\mu(x)dx=\E\left(\sum_{z\in \mathcal C_\Theta}\delta_{f(z)}\right),$$
in the sense that,
$$\E\left(\langle\sum_{z\in \mathcal C_\Theta}\delta_{f(z)},\phi\rangle\right)=\int_\Theta \phi \mu(x)dx,$$
where $\phi$ is any smooth test function defined on $\Theta$.

Then we have the following lemma
\begin{lem}\label{easycase}Let's denote $p_z(x,y,\xi)$ as the joint density of $(f,f',f'')$ at $z$. Then $$\mu(x)dx=\left(\int_{I}\int_\R |\xi|p_z(x,0,\xi)d\xi dz\right)dx,$$
 where $dx$, $d\xi$ and $dz$ are ~Lebesgue~ measures on $\R$.
\end{lem}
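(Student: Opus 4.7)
The plan is to mirror the derivation of the standard Kac-Rice formula reviewed earlier in the section, but with one extra derivative: critical values of $f$ correspond to zeros of $f'$, and we weight each such zero by $\phi(f(z))$ rather than by $1$.

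First I would start from the pointwise identity
\begin{equation*}
\sum_{z\in \mathcal{C}_\Theta} \phi(f(z)) \;=\; \int_I \phi(f(z))\,\delta_0(f'(z))\,|f''(z)|\,dz,
\end{equation*}
valid for a.s.\ realizations of $f$ for which critical points are nondegenerate (i.e.\ $f''\neq 0$ at the zeros of $f'$), where $\phi$ is any smooth test function supported in $\Theta$. This is the same coarea/change-of-variables identity used in the earlier display $\#\{z\in I:f(z)=0\}=\int_I\delta_0(f(z))|f'(z)|\,dz$, applied now to $f'$ instead of $f$; the factor $|f''|$ is the Jacobian of the map $z\mapsto f'(z)$, and the extra weight $\phi(f(z))$ simply records the value at which the critical point occurs.

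Next I would take expectations on both sides and commute $\E$ with the $dz$ integration by Fubini (which requires the mild moment/regularity bounds on $f,f',f''$ that hold in the Gaussian smooth setting we ultimately care about). Using the joint density $p_z(x,y,\xi)$ of $(f(z),f'(z),f''(z))$, the expectation of the integrand at fixed $z$ is
\begin{equation*}
\E\bigl[\phi(f(z))\,\delta_0(f'(z))\,|f''(z)|\bigr] \;=\; \int_\R\!\!\int_\R\!\!\int_\R \phi(x)\,\delta_0(y)\,|\xi|\,p_z(x,y,\xi)\,dx\,dy\,d\xi,
\end{equation*}
which collapses in the $y$-variable to $\int_\R\int_\R \phi(x)|\xi|\,p_z(x,0,\xi)\,dx\,d\xi$. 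Reordering the integrations and matching against the definition of $\mu$ via $\int_\Theta \phi(x)\mu(x)\,dx$ yields exactly the claimed formula $\mu(x)=\int_I\int_\R |\xi|\,p_z(x,0,\xi)\,d\xi\,dz$.

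The only step that is not a formal manipulation is the justification of the delta-function identity in the first line, since $\delta_0$ is a distribution and $f'$ is a random function. The standard way to handle this is to approximate $\delta_0$ by a smooth bump $\rho_\epsilon$, observe the classical change-of-variables identity $\int_I\phi(f)\rho_\epsilon(f')|f''|\,dz\to \sum_{z\in\mathcal{C}_\Theta}\phi(f(z))$ almost surely as $\epsilon\to0$ (using that critical points are isolated and nondegenerate a.s.), and pass to the limit under the expectation via dominated convergence; this is precisely the rigorous argument carried out in \cite{AT} for the ordinary Kac-Rice formula, and it transfers verbatim. I expect this limiting argument to be the only real technical point; the rest is Fubini and the definition of joint density.
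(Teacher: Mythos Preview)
Your proposal is correct and follows essentially the same route as the paper: both start from the delta-function identity $\sum_{z\in\mathcal C_\Theta}\phi(f(z))=\int_I\phi(f(z))\,\delta_0(f'(z))\,|f''(z)|\,dz$, take expectations, and integrate out the $y$-variable against the joint density $p_z(x,y,\xi)$. The only difference is that you spell out the mollification argument and the nondegeneracy hypothesis more carefully than the paper, which simply writes $\delta(f')\,df'$ and defers the rigor to \cite{AT}.
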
\begin{proof}We will first do a formal calculation.
\begin{equation}\label{formalcal} \langle \sum_{f\in\Theta,\,\, f'=0}\delta_{f(z)}, \phi(x)\rangle=\sum_{f\in\Theta,\,\, f'=0}\phi(f(z))=\int_I \chi_{\{f\in\Theta\}} \phi(f(z))\delta(f') df'.\end{equation}
By taking expectation on both sides and considering $df'=f''dz$, we have
\begin{align}\label{fc1} \E\left\langle \sum_{f\in\Theta,\,\, f'=0}\delta_{f(z)}, \phi(x)\right\rangle =&\int_{\R_x}\int_I\int_{\R_\xi}\int_{
\R_y} p_z(x,y,\xi)\chi_{\{x\in\Theta\}} \phi(x)\delta(y) |\xi|dy d\xi dzdx\\
\label{fc2}=&\int_{\Theta}\left(\int_I\int_{\R_\xi} p_z(x, 0,\xi) |\xi| d\xi dz \right)\phi(x)dx\\\label{fc3}=&\int_\Theta \phi(x)\mu(x)dx.\end{align}

This calculation requires justification in \eqref{formalcal}, \eqref{fc1} and \eqref{fc2}. The way to rigorously do that is to approximate the $\delta$ function by a sequence of simple functions and do a verbatim repetition of the proof in \cite[Theorem 11.2.3,Corollary 11.2.4]{AT}.

From \eqref{fc3} to the conclusion, one need to prove the density on both hand sides are continuous. 
 To prove the continuity of $\mu$, it is again repeating the argument in  \cite[Theorem 11.2.3, Corollary 11.2.4]{AT}, see \cite[Section 11.4]{AT} and \cite{AW} for details.
\end{proof}
In the proof of Lemma \ref{easycase}, we have assumed $I$ and $\Theta$ being compact subsets in $\R$. But the proof of Lemma \ref{easycase} can be generalized to the $SU(2)$ random polynomials $p_n$ which are a collection of complex Gaussian stochastic processes indexed by $\C$.

 The generalization of $\Theta$ to be $\C$ only requires picking up a sequence of discs centered at the origin with radius $m\in\{1,2,\ldots\}$ and taking limit in weak sense. And the generalization from $I$ to $\C$ is the same.

 However, we do need to modify the pairing by choosing the test functions $\phi(z)$ in the smooth compact supported space $\mathcal C_c^\infty(\R^2)$ in order to change the order of the integration on $\C$.
Following the proof of Lemma \ref{easycase}, we have
\begin{lem}\label{complexcase} The expected density of critical values of $p_n$ is,
\begin{equation}\label{density}\D_{p_n}d\ell_x=\left(\int_{\C}\int_\C |\xi|^2p_z(x,0,\xi)d\ell_\xi d\ell_z\right)d\ell_ x,\end{equation}
where $d\ell_x, d\ell_\xi$ and $d\ell_ z$ are Lebesgue measures on $\C$ and \begin{equation}\label{density}p_z(x,0,\xi)=\frac 1{\pi^3\det\Delta_z}\exp\left\{-\left\langle \begin{pmatrix}x\\ 0 \\ \xi \end{pmatrix}, \Delta_z^{-1}\begin{pmatrix}\bar x\\ 0 \\ \bar \xi \end{pmatrix}\right \rangle\right\}\end{equation} is the joint density of
$(p_n,p_n',p_n'')$ where $\Delta_z$ is the covariance matrix of $(p_n,p_n',p_n'')$. \end{lem}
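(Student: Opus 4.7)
The plan is to adapt the Kac--Rice argument of Lemma \ref{easycase} to the two-dimensional complex setting and to the noncompact index $\C$. The starting point is a delta-function identity: viewing $p_n':\C\to\C$ as a smooth map between real two-dimensional spaces, its real Jacobian determinant at a critical point $z_0$ equals $|p_n''(z_0)|^2$, and almost surely every zero of $p_n'$ is nondegenerate. Thus for any $\phi\in C_c^\infty(\R^2)$ I would write
\[
\sum_{z:\, p_n'(z)=0}\phi(p_n(z)) \;=\; \int_\C \phi(p_n(z))\,\delta_0\bigl(p_n'(z)\bigr)\,|p_n''(z)|^2\,d\ell_z,
\]
where $\delta_0$ is the two-dimensional Dirac delta at $0\in\C$. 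This is the complex analogue of the identity used inside the proof of Lemma \ref{easycase}, producing the factor $|\xi|^2$ in place of $|\xi|$.

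Next I would take expectations on both sides. Since $p_n$ is a degree-$n$ polynomial there are almost surely at most $n-1$ critical points, so the left-hand side is pointwise bounded by $(n-1)\|\phi\|_\infty$; combined with the compact support of $\phi$, this makes the Fubini interchange of expectation and $z$-integration routine. The expected integrand becomes an integral against the joint density $p_z(x,y,\xi)$ of the centered complex Gaussian triple $(p_n(z),p_n'(z),p_n''(z))$:
\[
\E\langle \mathcal C_{p_n},\phi\rangle \;=\; \int_\C\!\int_{\C^3}\phi(x)\,\delta_0(y)\,|\xi|^2\,p_z(x,y,\xi)\,d\ell_x\,d\ell_y\,d\ell_\xi\,d\ell_z.
\]
Integrating out $y$ against $\delta_0$ and swapping the order of integration identifies
\[
\E\langle \mathcal C_{p_n},\phi\rangle \;=\; \int_\C \phi(x)\left(\int_\C\!\int_\C |\xi|^2\,p_z(x,0,\xi)\,d\ell_\xi\,d\ell_z\right)d\ell_x,
\]
so by the defining equation \eqref{expect} the bracketed quantity is $\D_{p_n}(x)$.

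For the explicit Gaussian form of $p_z$, the triple $(p_n(z),p_n'(z),p_n''(z))$ is a centered complex Gaussian vector in $\C^3$ (linear in the i.i.d.\ standard complex Gaussians $a_j$) with some Hermitian covariance $\Delta_z$, and its density then takes the standard exponential form stated in the lemma whenever $\Delta_z$ is invertible. The main technical point is therefore to verify $\det\Delta_z\neq 0$ for almost every $z\in\C$. I would check this by directly computing $\Delta_z$ from the covariance kernel $\E\, p_n(z)\overline{p_n(w)}=(1+z\bar w)^n$ and its mixed derivatives at $z=w$, reducing the question to a rank-three condition on a $3\times 3$ Gram matrix built from the orthonormal sections $\sqrt{\binom{n}{j}}z^j$. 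Once this nondegeneracy is in hand, everything else is bookkeeping.
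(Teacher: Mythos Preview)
Your proposal is correct and follows essentially the same Kac--Rice adaptation as the paper: the paper simply remarks that the proof of Lemma~\ref{easycase} carries over to the complex setting, with $|\xi|^2$ replacing $|\xi|$ because the process is two real dimensions, and with compactly supported test functions used to justify the interchange of integration. Your version is slightly more explicit---you justify Fubini via the a priori bound $(n-1)\|\phi\|_\infty$ on the number of critical points rather than by exhausting $\C$ with discs, and you flag the nondegeneracy $\det\Delta_z\neq 0$ (which the paper implicitly handles later via the explicit computation \eqref{deter})---but the argument is the same.
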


\begin{proof}
The proof of this formula is the same as the one in Lemma
\ref{easycase}.

We start with a disk $U$ in place of $I$, take $\Theta\subset \C$ compact and write $\mathcal C_\Theta=\{z\in U: \,\,f(z)\in \Theta, \,\,\,f'(z)=0\}$ again. Then we have
\begin{equation}\label{formalcal_1} \langle \sum_{f\in\Theta,\,\, f'=0}\delta_{f(z)}, \phi(x)\rangle=\sum_{f\in\Theta,\,\, f'=0}\phi(f(z))=\int_U \chi_{\{f\in\Theta\}} \phi(f(z))\delta(f') (\frac{1}{2i}df'\wedge d\bar{f'}).\end{equation}

Taking expectation on both hand sides and noting $df'\wedge d\bar{f'}=|f''|^2dz\wedge d\bar{z}=|f''|d\ell_z$, we have
\begin{align*} \E\left\langle \sum_{f\in\Theta,\,\, f'=0}\delta_{f(z)}, \phi(x)\right\rangle =&\int_{\C_x}\int_U\int_{\C_\xi}\int_{
\C_y} p_z(x,y,\xi)\chi_{\{x\in\Theta\}} \phi(x)\delta(y) |\xi|^2d\ell_y d\ell_\xi d\ell_zd\ell_x\\
=&\int_{\Theta}\left(\int_U\int_{\C_\xi} p_z(x, 0,\xi) |\xi|^2 d\ell_\xi d\ell_z \right)\phi(x)d\ell_x\\=&\int_\Theta \phi(x)\mu(x)d\ell_x.\end{align*}

The justification process here is the same as in Lemma \ref{easycase}.
 
The lemma follows if we replace $f(z)$ by $p_n(z)$. Since $p_n$ is a Gaussian process,
  the joint density $p_z(x,y,\xi)$ is uniquely determined by the
covariance matrix of $(p_n,p_n',p_n'')$.
\end{proof}

\section{Proof of main Theorems}
\subsection{The Density $\D_{p_n}$}
In this subsection, we will derive the exact formula for $\D_{p_n}$  based on Lemma \ref{complexcase}. We prove,
\begin{prop}\label{Dn}The expected density of the empirical measure of $\mathcal C_{p_n}$ is given by the formula
\begin{equation}\label{dpn}\D_{p_n}=\frac{n-1}{\pi}\int_1^\infty \frac{n(r-1)+1}{r^{n+2}}e^{-\frac{n(r-1)+1}{r^n}|x|^2}dr.\end{equation}
Thus $\D_{p_n}$ is a function only depending on $|x|$.
\end{prop}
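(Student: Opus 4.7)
The plan is to apply Lemma \ref{complexcase} directly. Since $p_n$ is a mean-zero complex Gaussian process with reproducing kernel $K(z,w)=\E(p_n(z)\overline{p_n(w)}) = (1+z\bar w)^n$, the covariance matrix $\Delta_z$ of $(p_n(z),p_n'(z),p_n''(z))$ is obtained from the mixed derivatives of $K$ at $w=z$: $(\Delta_z)_{ij} = \d_z^{i-1}\d_{\bar w}^{j-1} K(z,w)|_{w=z}$. I would list these nine entries once (they are all of the shape $n(n-1)\cdots z^a \bar z^b (1+|z|^2)^{n-c}$) so that subsequent algebra is mechanical.

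Rather than inverting $\Delta_z$ directly, I would exploit the conditional Gaussian structure and factor
\[
p_z(x,0,\xi) \;=\; p_{p_n'(z)}(0)\cdot p_{p_n(z)\mid p_n'(z)=0}(x)\cdot p_{p_n''(z)\mid p_n(z)=x,\,p_n'(z)=0}(\xi).
\]
The first factor is $1/(\pi\Delta_{22})$. The second is a mean-zero complex Gaussian in $x$ with conditional variance $V := \Delta_{11}-|\Delta_{12}|^2/\Delta_{22}$; a short calculation yields $V = (1+|z|^2)^n/(1+n|z|^2)$ and the convenient simplification $\Delta_{22}V = n(1+|z|^2)^{2n-2}$. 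The third factor is complex Gaussian in $\xi$ with mean $\mu(x)$ linear in $x$ and some variance $\sigma^2$. Using the Schur-complement formula for conditional covariance applied twice, the explicit values I expect are
\[
\mu(x) = -\,\frac{n(n-1)\bar z^{\,2}}{(1+|z|^2)^2}\,x,\qquad \sigma^2 = 2n(n-1)(1+|z|^2)^{n-4},
\]
so that $\int_{\C}|\xi|^2\,p_{p_n''\mid p_n=x,p_n'=0}(\xi)\,d\ell_\xi = |\mu(x)|^2+\sigma^2$. These computations are tedious but linear; the nontrivial cancellation is that the conditional variance $\sigma^2$ loses all dependence on $z$ except through $(1+|z|^2)^{n-4}$.

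Plugging everything back into Lemma \ref{complexcase} and using radial symmetry of the integrand in $z$ (everything depends only on $|z|^2$), I change variables to $r=1+|z|^2$ with $d\ell_z = \pi\,dr$, obtaining
\[
\D_{p_n}(x) = \frac{n(n-1)^2|x|^2}{\pi}\!\int_1^\infty\!\frac{(r-1)^2}{r^{2n+2}}\phi(r)\,dr + \frac{2(n-1)}{\pi}\!\int_1^\infty\!\frac{\phi(r)}{r^{n+2}}\,dr,
\]
where $\phi(r) := \exp\!\bigl(-|x|^2(n(r-1)+1)/r^n\bigr)$. The last step is to collapse these two integrals into one. The key observation is the identity
\[
\phi'(r) \;=\; \frac{n(n-1)(r-1)|x|^2}{r^{n+1}}\,\phi(r),
\]
which lets me rewrite the first integrand as $\frac{1}{n(n-1)|x|^2}\cdot\frac{r-1}{r^{n+1}}\phi'(r)$. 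An integration by parts (boundary terms vanish at $r=1$ since $r-1=0$, and at $\infty$ since $\phi\to 1$ while $r^{-(n+1)}\to 0$) produces $\frac{n-1}{\pi}\int_1^\infty \frac{nr-(n+1)}{r^{n+2}}\phi(r)\,dr$ for the first piece, and adding the second piece $\frac{2(n-1)}{\pi}\int_1^\infty \frac{\phi(r)}{r^{n+2}}dr$ yields $\frac{n-1}{\pi}\int_1^\infty \frac{n(r-1)+1}{r^{n+2}}\phi(r)\,dr$, which is the stated formula. Radial symmetry in $|x|$ is manifest from the final expression.

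The main obstacle is purely algebraic: keeping track of the rational factors $(1+|z|^2)^{\bullet}$ and $(1+n|z|^2)^{\bullet}$ throughout the Schur-complement computations. Noticing the integration-by-parts identity that couples $\phi'$ with the polynomial factor $(r-1)|x|^2$ is what turns the sum of two apparently different integrals into the single clean expression in (\ref{dpn}).
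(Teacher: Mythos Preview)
Your proposal is correct and follows the same overall arc as the paper: start from Lemma~\ref{complexcase}, use the covariance kernel $(1+z\bar w)^n$, integrate out $\xi$, pass to the radial variable $r=1+|z|^2$, and then apply the integration-by-parts identity coming from $\phi'(r)=n(n-1)(r-1)|x|^2 r^{-(n+1)}\phi(r)$ to merge the two integrals. The one genuine organizational difference is in how the $\xi$-integral is handled. The paper inverts $\Delta_z$ directly (via cofactors), writes the resulting quadratic form $Q_z(x,\xi)$, completes the square in $\xi$, and then shifts the $\xi$-integration; you instead factor $p_z(x,0,\xi)$ as a product of conditional Gaussian densities and read off $\int|\xi|^2\,p(\xi)\,d\ell_\xi=|\mu(x)|^2+\sigma^2$ from the conditional mean and variance via Schur complements. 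These are two faces of the same linear algebra, but your route bypasses the explicit matrix inversion and the somewhat ad hoc rewriting of $Q_z$ as the sum of squares in the paper's equation~(\ref{vv}); it also makes it transparent \emph{why} the $\xi$-integral produces exactly the two terms $(n^2-n)r^n(r-1)^2|x|^2$ and $2r^{2n}$ (they are $|\mu|^2$ and $\sigma^2$). Both computations land on the identical intermediate expression~(\ref{ttttt}), after which the proofs coincide verbatim. One cosmetic point: with the paper's convention for $\Delta_z$ the conditional mean comes out as $-n(n-1)z^2(1+|z|^2)^{-2}x$ rather than with $\bar z^2$, but since only $|\mu(x)|^2$ enters this is immaterial.
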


\begin{proof}
By Lemma \ref{complexcase}, in order to compute the expected density of critical values of $p_n$,  we first need to compute the covariance matrix of $(p_n,p_n',p_n'')$.

By definition, the covariance matrix of the Gaussian process $(p_n,p_n',p_n'')$ is given by \cite{AT, DSZ}
$$ \Delta=\begin{pmatrix} \E(p_n\overline{p_n}) & \E(p'_n\overline{p_n}) & \E(p_n''\overline{p_n}), \\
 \E(p_n\overline{p_n'})  & \E(p'_n\overline{p_n'}) & \E(p''_n\overline{p'_n}), \\
\E(p_n\overline{p''_n})  &\E(p'_n\overline{p''_n}) &\E(p''_n\overline{p''_n})
\end{pmatrix}.
$$
The covariance kernel for the Gaussian process $p_n$ is$$\E(p_n(z)\overline{p_n}(w)): =\Pi_n(z,w)=(1+z\bar w)^n.$$
Then we can express each entry in the covariance matrix as following
$$\E(p_n\overline{p_n})=\Pi_n(z,z)=(1+|z|^2)^n,$$
$$\E(p'_n\overline{p_n})=\frac {\partial\Pi_n(z,w)}{\partial z}|_{z=w}=n\bar{w}(1+z\bar{w})^{n-1}|_{z=w}=n\bar{z}(1+|z|^2)^{n-1},$$

$$ \E(p_n''\overline{p_n})=\frac {\partial^2\Pi_n(z,w)}{\partial^2 z}|_{z=w}=n(n-1)\bar{w}^2(1+z\bar{w})^{n-2}_{z=w}=n(n-1)\bar{z}^2(1+|z|^2)^{n-2},$$
$$\E(p'_n\overline{p'_n})=\frac {\partial^2\Pi_n(z,w)}{\partial z\partial {\bar w}}|_{z=w}=n(1 + z\bar{w})^{n-2}((n-1)z\bar{w} + 1+z\bar{w})|_{z=w}=n(n|z|^2+1)(1+|z|^2)^{n-2},$$

\begin{align*}\E(p_n''\overline{p_n'})=\frac {\partial^3\Pi_n(z,w)}{\partial^2 z\partial \bar w}|_{z=w}&=n(n-1)(1+z \bar{w})^{n-3}\bar{w}(nz\bar{w}+2)|_{z=w}\\&=n(n-1)(1+|z|^2)^{n-3}\bar{z}(n|z|^2+2),\end{align*}

\begin{align*}\E(p''_n\overline{p''_n})&=\frac {\partial^4\Pi_n(z,w)}{\partial^2 z\partial^2 \bar w}|_{z=w}\\&=
n(n-1)((n-2)(n-3)z^2\bar{w}^2(1+z\bar{w})^{n-4}+4(n-2)z\bar{w}(1+z\bar{w})^{n-3}+2(1+z\bar{w})^{n-2})|_{z=w}\\
&=n(n-1)(1+|z|^2)^{n-4}(n(n-1)|z|^4+4(n-1)|z|^2+2).\end{align*}
These show the covariance matrix is
$$ \Delta_z=(1+|z|^2)^{n}\begin{pmatrix} 1 & \frac{n\bar z}{1+|z|^2}& \frac{n(n-1)\bar z^2}{(1+|z|^2)^2}\\
 \frac{n z}{1+|z|^2} & \frac{n+n^2| z|^2}{(1+|z|^2)^2}& \frac{2n(n-1)\bar z+(n-1)n^2\bar z |z|^2}{(1+|z|^2)^3}\\
\frac{n(n-1) z^2}{(1+|z|^2)^2} &  \frac{2n(n-1)z+(n-1)n^2 z |z|^2}{(1+|z|^2)^3}& \frac{2n(n-1)+4n(n-1)^2|z|^2+n^2(n-1)^2|z|^4}{(1+|z|^2)^4}
\end{pmatrix}\;.
$$
Hence\begin{equation}\label{deter}\det \Delta_z= (1+|z|^2)^{3n}\frac{2n^3-2n^2}{(1+|z|^2)^6},\end{equation} which never degenerates when $n>1$.
We denote$$Q_z(x,\xi)=:\left\langle \begin{pmatrix}x\\ 0 \\ \xi \end{pmatrix}, \Delta_z^{-1}\begin{pmatrix}\bar x\\ 0 \\ \bar \xi \end{pmatrix}\right \rangle .$$
Then by direct computations, we rewrite (note we only need to calculate the four corner entries of the inverse matrix)   $$Q_z(x,\xi)=\frac{(1+|z|^2)^{2n}}{\det \Delta_z}\left\langle \begin{pmatrix}x \\ \xi \end{pmatrix}, \begin{pmatrix} \frac{n^5|z|^4+2n^4(|z|^2-|z|^4)+n^3(|z|^4-2|z|^2+2)-2n^2}{(1+|z|^2)^6}& \frac{(n^3-n^2)\bar z^2}{(1+|z|^2)^4}\\\frac{(n^3-n^2) z^2}{(1+|z|^2)^4}& \frac n {(1+|z|^2)^2}\end{pmatrix} \begin{pmatrix}\bar x\\  \bar \xi \end{pmatrix} \right\rangle.$$
We expand this expression and further rewrite $Q_z(x,\xi)$ as
\begin{equation}\label{vv}\frac {1}{2(1+|z|^2)^n}\left(\left|\sqrt{n^2-n}\bar z^2x+\frac 1{\sqrt{n^2-n}}\xi (1+|z|^2)^2\right|^2+2(n|z|^2+1)|x|^2\right).\end{equation}

By Lemma \ref{complexcase}, the expected density of critical values of $p_n$ is given by the formula
\begin{equation}\label{ddddd}\D_{p_n}(x)=\frac 1{\pi^3}\int_{\C}\int_\C \frac {e^{-Q_z(x,\xi)}}{\det \Delta_z}|\xi|^2d\ell_\xi d\ell_z.\end{equation}

Let's integrate $\xi$ variable first. Plug (\ref{vv}) into (\ref{ddddd}), we can rewrite (\ref{ddddd}) as
\begin{equation}\label{ttt}\D_{p_n}(x)=\frac 1{\pi^3}\int_{\C}K_z \frac {e^{-\frac{n|z|^2+1}{(1+|z|^2)^n}|x|^2}}{\det \Delta_z} d\ell_z,
\end{equation}
where $K_z$ is the following integral in $\xi$ variable
$$K_z=\int_\C \exp \left\{-\frac 1{2(1+|z|^2)^n}\left|\sqrt{n^2-n}\bar z^2x+\frac 1{\sqrt{n^2-n}}\xi (1+|z|^2)^2\right|^2\right\}|\xi|^2d\ell_\xi .$$
We will first make the exponent into a perfect square. We change variables $\xi\to \frac 1 {\sqrt{n^2-n}} \xi (1+|z|^2)^2$ to get
$$K_z=\frac {(n^2-n)^2}{(1+|z|^2)^8}\int_\C \exp \left\{-\frac 1{2(1+|z|^2)^n}\left|\sqrt{n^2-n}x\bar z^2+\xi\right|^2\right \}|\xi|^2d\ell_\xi .$$
Further changing variable $\xi\to \sqrt{n^2-n}x\bar z^2+\xi$ to get  $$K_z=\frac {(n^2-n)^2}{(1+|z|^2)^8}\int_\C \exp \left\{-\frac {|\xi|^2}{2(1+|z|^2)^n}\right\}\left|\xi-\sqrt{n^2-n}x\bar z^2\right|^2d\ell_\xi.$$
This turns into a Gaussian integral.
Noting that the first moment terms equal to zero after expanding the norm square, we have
$$K_z=\pi \frac {(n^2-n)^2}{(1+|z|^2)^8}\left[2(1+|z|^2)^n(n^2-n)|x|^2|z|^4+4(1+|z|^2)^{2n}\right].$$
If  we change variable $r=1+|z|^2$, we can rewrite
$$K_z=\pi \frac {(n^2-n)^2}{r^8}\left[2r^n(n^2-n)|x|^2(r-1)^2+4r^{2n}\right]$$
and
$$\det\Delta_z=r^{3n-6}(2n^3-2n^2), \,\,\,\, e^{-\frac{n|z|^2+1}{(1+|z|^2)^n}|x|^2}=e^{-\frac{n(r-1)+1}{r^n}|x|^2}.$$
Now we plug these two lines back into the formula of (\ref{ttt}) and use the polar coordinate $d\ell_z=\frac 1 2drd\theta$, integrate on $\theta$ variable, we can rewrite $\D_{p_n}$ as

\begin{equation}\label{ttttt}\D_{p_n}=\frac{n-1}{\pi}\int_1^\infty \frac{(n^2-n)r^n(r-1)^2|x|^2+2r^{2n}}{r^{3n+2}}e^{-\frac{n(r-1)+1}{r^n}|x|^2} dr.\end{equation}
There are two parts in the numerator, we integrate by part to simplify the first term in the numerator. Note that $$de^{-\frac{n(r-1)+1}{r^n}|x|^2}=e^{-\frac{n(r-1)+1}{r^n}|x|^2}[r^{-n-1}(n^2-n)(r-1)|x|^2]dr,$$
then the first part is equal to
\[ \begin{aligned}&\frac{n-1}{\pi}\int_1^\infty \frac{(n^2-n)r^n(r-1)^2|x|^2}{r^{3n+2}}e^{-\frac{n(r-1)+1}{r^n}|x|^2} dr\\=&\frac{n-1}{\pi}\int_1^\infty \frac{(r-1)}{r^{n+1}}de^{-\frac{n(r-1)+1}{r^n}|x|^2}\\=&\frac{n-1}{\pi}\int_1^\infty [\frac n{r^{n+1}}-\frac{n+1}{r^{n+2}}]e^{-\frac{n(r-1)+1}{r^n}|x|^2}dr.\end{aligned}\]
Hence the density (\ref{ttttt}) is further simplified to be
$$\D_{p_n}(x)=\frac{n-1}{\pi}\int_1^\infty \frac{n(r-1)+1}{r^{n+2}}e^{-\frac{n(r-1)+1}{r^n}|x|^2}dr,$$
which completes the proof.

\end{proof}






\subsection{Proof of Theorem \ref{main}}
Now we turn to the proof of our main Theorem \ref{main}.

We denote $t=\frac 1{r}$ and $$y_n(t)=\frac{n(r-1)+1}{r^n}=nt^{n-1}-(n-1)t^n,$$ then we have $t\in [0,1]$ and $y_n(t)\in [0,1]$ with $y_n(0)=0$ and $y_n(1)=1$.

Substituting $\frac{n(r-1)+1}{r^n}$ by $y_n(t)$, we rewrite $\D_{p_n}$ in Proposition (\ref{Dn}) as
 \begin{equation}\label{inte} \begin{aligned}\D_{p_n}=&\frac{n-1}{\pi}\int_1^\infty \frac{y_n(t)}{r^2}e^{-y_n(t)|x|^2}dr\\=&\frac{n-1}{\pi}\int_0^1
y_n(t)e^{-y_n(t)|x|^2}dt,\end{aligned}\end{equation}
where in the last step, we change variable $t\to \frac 1 r$.

The trick to estimate  $\D_{p_n}$ is to calculate
$$g_n(|x|^2):=\int_0^1 e^{-y_n(t)|x|^2}dt.$$
Integrating by part, we have
\[ \begin{aligned}g_n(|x|^2)=&\int_0^1 t' e^{-y_n(t)|x|^2}dt\\=&e^{-|x|^2}+\int_0^1 t y'_n(t)|x|^2e^{-y_n(t)|x|^2}dt\\ =&e^{-|x|^2}
+n(n-1)|x|^2\int_0^1(t^{n-1}-t^n)e^{-y_n(t)|x|^2}dt\\=&e^{-|x|^2}
+n|x|^2\int_0^1(nt^{n-1}-(n-1)t^n)e^{-y_n(t)|x|^2}dt-n|x|^2\int_0^1 t^{n-1}e^{-y_n(t)|x|^2}dt\\=&e^{-|x|^2}
+n|x|^2\int_0^1y_n(t)e^{-y_n(t)|x|^2}dt-|x|^2h_n(|x|^2)\\
=& e^{-|x|^2}
+\frac {\pi n |x|^2}{n-1}\D_{p_n}-|x|^2h_n(|x|^2),\end{aligned}\]
where we denote \begin{equation}\label{hn}h_n(|x|^2):=n\int_0^1 t^{n-1}e^{-y_n(t)|x|^2}dt.\end{equation}
Thus, \begin{equation}\label{firstestimate}\D_{p_n}=\frac{n-1}{n\pi}\left(\frac{g_n(|x|^2)-e^{-|x|^2}}{|x|^2}
+h_n(|x|^2)
\right).
\end{equation}
We claim $$\lim\limits_{n\to\infty}g_n(|x|^2)=1.$$
This is quite straight forward. As $\forall\epsilon\in(0,1)$,
we rewrite,  $$g_n(|x|^2)=\int_0^1 e^{-y_n(t)|x|^2}dt=\int_0^{1-\epsilon}+\int_{1-\epsilon}^1.$$
Since $y_n(t)\to 0$ uniformly on $[0,1-\epsilon]$ as $n\to\infty$, thus
$$\lim_{n\to \infty} \int_0^{1-\epsilon}e^{-y_n(t)|x|^2}dt=\int_0^{1-\epsilon}\lim_{n\to \infty} e^{-y_n(t)|x|^2}dt=1-\epsilon.$$
For the second integration, since $y_n(t)\geq 0$ on $[0,1]$, we have $\int_{1-\epsilon}^1 e^{-y_n(t)|x|^2}dt\leq \epsilon.$
Hence we get $$1-\epsilon\leq\varliminf\limits_{n\to\infty}\int_0^1 e^{-y_n(t)|x|^2}dt\leq\varlimsup\limits_{n\to\infty}\int_0^1 e^{-y_n(t)|x|^2}dt\leq 1.$$
As $\epsilon$ is chosen arbitrarily, letting $\epsilon\to0^+$ yields the claim.

Now we estimate (\ref{firstestimate}) to be \begin{equation}\label{ddd}\begin{aligned}\D_{p_n}=&\frac{n-1}{n\pi}\left(\frac{1-e^{-|x|^2}}{|x|^2}
+h_n(|x|^2)
+o(1)\right)\\=&\frac{1-e^{-|x|^2}}{\pi |x|^2}
+\frac {1} {\pi} h_n(|x|^2)
+o(1)\end{aligned}\end{equation}
as $n \to \infty$.

We now turn to estimate $h_n(|x|^2)$. Change variable $s=t^n$, $h_n$ will be rewritten as
$$\int_0^1e^{z_n(s)|x|^2}ds,$$
where $$z_n(s)=-ns^{\frac{n-1}{n}}+(n-1)s.$$
It's easy to check that $$z_n(s)\leq z_{n+1}(s) $$
 for any fixed $s\in [0,1]$.

Thus we have $z_n(s)$ monotone increasing to $-(s-s\log s)$ as $n\to\infty$, hence, $h_n(|x|^2)$ will satisfy
$$\lim\limits_{n\to \infty}h_n(|x|^2)=\int_0^1e^{-(s-s\log s)|x|^2}ds.
$$
This will give us the estimate $$h_n(|x|^2)=\int_0^1e^{-(s-s\log s)|x|^2}ds+o(1)$$
as $n\to \infty$.

Hence we further estimate (\ref{ddd}) to be
$$\D_{p_n}=\frac{1-e^{-|x|^2}}{\pi|x|^2}+\frac 1 \pi\int_0^1e^{-(s-s\log s)|x|^2}ds
+o(1) \,\,\,\mbox{as}\,\,\, n\to \infty,$$
which completes the proof of Theorem \ref{main}.

\section{Growth of critical values and rescaling limit}\label{for}
By Theorem \ref{main2}, the expected density of the modulus of critical values is $ 1/ x$ decay as $n$ large enough, which implies that the integration of the density over the interval $[a,\infty)$ is infinity for any $a>0$ large enough,  i.e., critical values
accumulate at infinity as $n$ tens to $\infty$. In this section, we will consider the rate of growth of the critical values and its rescaling limit.

 \subsection{Rate of growth}
 We  consider  the distribution function $F_n(x)$ of the following probability density $$\frac{\D_{|p_n|}}{n-1}=\frac1{n-1}\E (\sum_{p_n'=0}\delta_{|p_n|}).$$ We write the distribution function as \begin{equation}\label{distri}F_n(x)=\int_0^x \frac{\D_{|p_n|}}{n-1}dy= \frac{2\pi}{n-1}\int_0^xy\D_{p_n}(y)dy\end{equation}
by relation \eqref{relation}.

Using the identity \eqref{inte} and integrating by part, we will have $$F_n(x)=1-g_n(|x|^2).$$ Now we turn to prove Theorem  \ref{grow}.

\begin{proof}
We only need to prove $g_n(e^{n^{1+\epsilon}})\to 0$ and $g_n(e^{n^{1-\epsilon}})\to 1$. Let's apply the dominated convergence theorem to $g_n(n^{1\pm\epsilon})$, we have
$$\displaystyle \lim\limits_{n \to \infty}g_n(n^{1\pm\epsilon})=\int_0^1 e^{-\lim\limits_{n\to\infty}y_n(t)e^{n^{1\pm\epsilon}}}dt.$$

For '$+$' part, we know that for any fixed $0<t<1$,
$$\lim\limits_{n\to\infty}y_n(t)e^{n^{1+\epsilon}}=\lim\limits_{n\to\infty}(nt^{n-1}-(n-1)t^n)e^{n^{1+\epsilon}}\geq \lim\limits_{n\to\infty}t^{n-1}e^{n^{1+\epsilon}}=\infty.$$

For '$-$' part, we know that $\forall t>0$,
$$\lim\limits_{n\to\infty}y_n(t)e^{n^{1+\epsilon}}\leq \lim\limits_{n\to\infty}nt^{n-1}e^{n^{1-\epsilon}}=0,$$
which implies the conclusions.
\end{proof}


\subsection{Rescaling limit}
As illustrated
by Theorem  \ref{grow}, we need to consider the following rescaled distribution
$$\tilde F_n(x)=F_n(e^{\frac{nx}2})$$
and the corresponding rescaled probability density
$$R_n(x)=(\tilde F_n(x))'=ne^{nx}\int_0^1y_n(t)e^{-y_n(t)e^{nx}}dt$$
by relations \eqref{inte} and \eqref{distri}, where $y_n(t)=nt^{n-1}-(n-1)t^n\in [0,1]$.

Now we prove Theorem \ref{rescaling}.
\begin{proof}

For $x=0$, we have
$$\begin{aligned}\lim_{n\to\infty}R_n(0)= &\lim_{n\to\infty}\int_0^1 ny_n(t)e^{-y_n(t)}dt  \\=& \lim_{n\to \infty} \frac{\pi n}{n-1}\mathbb D_{p_n}(1)= \pi\lim_{n\to \infty}\mathbb D_{p_n}(1),
\end{aligned}$$
where   $\lim_{n\to \infty}\mathbb D_{p_n}(1)$ is a constant given by the leading term in \eqref{mainequation}.

For $x<0$, we have

$$ \begin{aligned}0\leq &\lim_{n\to\infty}\int_0^1 ny_n(t)e^{nx}e^{-y_n(t)e^{nx}}dt\leq \lim_{n\to\infty}\int_0^1 ny_n(t)e^{nx} dt \\ \leq& \lim_{n\to\infty}\int_0^1 n e^{nx} dt \leq \lim_{n\to\infty}ne^{nx} = 0,\end{aligned}$$
which implies \begin{equation}\lim_{n\to\infty}R_n(x)=0\,\,\,\mbox{for}\,\, x<0.\end{equation}

Now we consider the case for $x>0$.  We integrate by part
$$ \begin{aligned}\int_0^1e^{-y_n(t)e^{nx}}dt=&te^{-y_n(t)e^{nx}}|^1_0+\int_0^1y'_n(t)e^{nx}te^{-y_n(t)e^{nx}}dt\\=&e^{-e^{nx}}+
\int_0^1n(n-1)(t^{n-1}-t^n)e^{nx}e^{-y_n(t)e^{nx}}dt\\=&e^{-e^{nx}}+R_n(x)
-\int_0^1nt^{n-1}e^{nx}e^{-y_n(t)e^{nx}}dt\\=&:\, e^{-e^{nx}}+R_n(x)-Q_n(x).\end{aligned}$$
Thus we have
\begin{equation}\label{r}R_n(x)=\int_0^1e^{-y_n(t)e^{nx}}dt+Q_n(x)-e^{-e^{nx}}.\end{equation}
For $x>0$, the third term $e^{-e^{nx}}\to 0$ as $n\to\infty$.

Now we claim:
\begin{equation}\int_0^1e^{-y_n(t)e^{nx}}dt\to e^{-x},\,\,\, Q_n(x)\to 0 \end{equation}
as $n\to\infty$.

If the claim holds, we will get
\begin{equation}\label{s}\lim_{n\to \infty}R_n(x)=e^{-x}\,\,\,\mbox{for}\,\, x>0,\end{equation}
which completes the proof of Theorem \ref{rescaling}.

We now prove the first claim:  For any $t<e^{-x}$, $$\lim\limits_{n\to\infty}y_n(t)e^{nx}\leq \lim\limits_{n\to\infty}nt^{n-1}e^{xn}=0;$$ for any $ t>e^{-x}$, $$\lim\limits_{n\to\infty}y_n(t)e^{nx}\geq \lim\limits_{n\to\infty}t^{n-1}e^{xn}=\infty. $$  Therefore by the dominated convergence theorem, we have $$\displaystyle\lim\limits_{n\to\infty}\int_0^1 e^{-y_n(t)e^{nx}}dt=\int_0^{e^{-x}}1dt=e^{-x}.$$
Now we prove $Q_n(x)\to 0$ as $n\to \infty$. By changing variables, we write $Q_n$ as
 \begin{equation} \begin{aligned}Q_n(x)=&\int_{0}^1n(te^x)^{n-1}e^{-n(te^x)^{n-1}e^x+(n-1)(te^x)^n}d(te^x)\\=&
\int_0^{e^x}nr^{n-1}e^{-nr^{n-1}e^x+(n-1)r^n}dr.
\end{aligned}\end{equation}
We separate the integral $\int_0^{e^x}$ in $Q_n$ to be  $$Q_n(x)=\int_0^{1}+\int_{1}^{e^x}:= I_{1,n}+I_{2,n}.$$
First note $I_{1,n}\geq 0$, $I_{2,n}\geq 0$. We rewrite
\begin{equation} \begin{aligned}I_{1,n}=&
\int_0^{1}nr^{n-1}e^{-nr^{n-1}e^x+(n-1)r^n}dr
\\=&\int_0^1 e^{-nu^{\frac {n-1}n}e^x+(n-1)u}du,\end{aligned}\end{equation}
where we change variable $u=r^n$.

Since $e^x>1$ strictly, we must have $-nu^{\frac {n-1}n}e^x+(n-1)u\to -\infty$ as $n\to\infty$.
By dominated convergent theorem, we have $I_{1,n}\to 0$ as $n\to\infty$.

For the second integration, we further separate
\begin{equation}\label{two}\begin{aligned}I_{2,n}=& \int_1^{1+\epsilon}+\int_{1+\epsilon}^{e^x}=:I_{3,n}+I_{4,n},\end{aligned}\end{equation}
where we choose $1>\epsilon>0$ such that $1+3\epsilon<e^x$.

For the first part, since $e^x>1+3\epsilon>1+\epsilon$, we have
$$r^{n-1}(-ne^x+(n-1)r)\leq r^{n-1}(-nr(1+\epsilon)+(n-1)r)=-r^n(1+n\epsilon),$$
thus
$$I_3\leq \int_1^{1+\epsilon}e^{-r^n(1+n\epsilon)+\log n+(n-1)\log r}dr. $$
But $-r^n+(n-1)\log r\leq 0$ for $r\geq 1$; and $-r^nn\epsilon+\log n\leq -n\epsilon+\log n\to -\infty$ as $n\to\infty$.

Thus $I_{3,n}\leq n\epsilon e^{-n\epsilon}\to 0$ as $n\to\infty$.

For the second part in \eqref{two}, we have
$$\begin{aligned}&\int_{1+\epsilon}^{e^x}nr^{n-1}e^{-nr^{n-1}e^x+(n-1)r^n}dr  \leq &\int_{1+\epsilon}^{e^x}nr^{n-1}e^{- r^{n-1}e^x }dr=\int_{1+\epsilon}^{e^x}e^{(n-1)\log r+\log n-r^{n-1}e^x}dr.
\end{aligned}$$
But for $r\in [1+\epsilon, e^x]$, we have $$-r^{n-1}e^x+(n-1)\log r+\log n\leq -(1+\epsilon)^{n-1}e^x+(n-1)x+\log n\to -\infty$$ as $n\to \infty$.

Hence the $I_{4,n}$ will tend to $0$ as $n\to\infty$. Therefore $I_{2,n}$ tends to $0$ as $n\to\infty$.

Now we must have $\lim_{n\to \infty }Q_n(x)\to 0$, which completes the claim.

\end{proof}

\end{document}